\theoremstyle{plain}
\newtheorem{thm}{Theorem}[section]
\newtheorem*{thm*}{Theorem}
\newtheorem{prop}[thm]{Proposition}
\newtheorem{cor}[thm]{Corollary}
\theoremstyle{definition}
\theoremstyle{remark}
\newtheorem*{question}{Question}
\newcommand{\C}{\mathbb{C}}
\renewcommand{\H}{\mathbb{H}}
\newcommand{\Z}{\mathbb{Z}}
\newcommand{\N}{\mathbb{N}}
\newcommand{\R}{\mathbb{R}}
\newcommand{\re}{\textnormal{Re}}
\setlist{nosep}
\setlist{noitemsep}
\numberwithin{equation}{section}
\lstdefinelanguage{Sage}[]{Python}
{morekeywords={False,sage,True},sensitive=true}
\newcolumntype{M}[1]{>{\centering\arraybackslash}m{#1}}
\title[Crank equidistribution and $(k,j)$-overlined partitions]{Crank equidistribution and $(k,j)$-overlined partitions}
\author{Adithya Chakravarthy}
\address{Department of Mathematics, University of Toronto
	Bahen Centre, 40 St. George St., Toronto, ON}
\email{adithya.chakravarthy@mail.utoronto.ca}
\author{Joshua Males}
\address{School of Mathematics, University of Bristol, Bristol, BS8 1TW, UK, and the Heilbronn Institute for Mathematical Research, Bristol, UK.}
\email{joshua.males@bristol.ac.uk}
\author{Shuyang Shen}
\address{Department of Mathematics, University of Toronto
	Bahen Centre, Room 6290
	40 St. George St., Toronto, ON}
\email{shuyang.shen@mail.utoronto.ca}
\begin{document}

\begin{abstract}
In a paper published in 2023, Wagner introduced and studied Jacobi forms with complex multiplication, and gave several applications. One such application was in constructing a new doubly-infinite family of partition-theoretic objects, called $(k,j)$-coloured overpartitions and labelled by $\overline{p}_{k,j}$, and using the Jacobi forms to construct crank functions which explain the Ramanujan-type congruences satisfied by $\overline{p}_{k,j}$. In this note, we give an asymptotic formula for the number of $(k,j)$-coloured overpartitions and prove that any crank constructed by Wagner is asymptotically equidistributed on arithmetic progressions, following several recent papers in the literature.
\end{abstract}

\subjclass[2020]{}

\keywords{}

\thanks{ }

\maketitle

\section{Introduction}

A partition of a positive integer $n$ is a non-increasing list $\lambda = (\lambda_1,\dots,\lambda_s)$ such that $\sum_j \lambda_j=n$. Let $p(n)$ denote the number of partitions of $n$. The function $p(n)$ and its variants are some of the most widely studied objects at the interface of number theory and combinatorics. In particular, they were the famous object of study when Hardy and Ramanujan \cite{HardyRamanujan} developed the now-ubiquitous Circle Method and proved the stunning asymptotic formula
\begin{align}\label{eqn: asymp p}
		p(n)\sim \frac{1}{4\sqrt{3}n} e^{\pi \sqrt{\frac{2n}{3}}}, \qquad n \to \infty.
\end{align}

Using techniques in $q$-series, Ramanujan also proved that $p(n)$ satisfies certain congruences modulo $5,7,$ and $11$. In particular
\begin{align*}
	p(5n+4) \equiv 0 \pmod{5}, \qquad
	p(7n+5) \equiv 0 \pmod{7}, \qquad
	p(11n+6) \equiv 0 \pmod{11}.
\end{align*}
However, his proof was unable to combinatorially explain why the Ramanujan congruences hold. In order to attempt to explain the congruences, Dyson \cite{Dyson1944} famously introduced a statistic on partitions called the rank, defined as the largest part in the partition minus the number of parts. Atkin and Swinnerton-Dyer were able to use Dyson's rank to explain the Ramanujan congruences modulo $5$ and $7$ in 1954 \cite{ASD}.  However, the rank is unable to explain the modulo $11$ congruence, and so Dyson conjectured that there should exist a further statistic on partitions which explains all three congruences simultaneously. He dubbed this unknown statistic the crank.

It took until the late 1980s before the crank statistic was found by Garvan and Andrews \cite{AGBAMS, GarvanTAMS} who defined
	\begin{equation}\label{def_crank}
	\begin{aligned}\text{crank}(\lambda) &:= \begin{cases} \text{largest part of } \lambda & \text{if 1 is not a part of } \lambda, \\
			\mu(\lambda) - o(\lambda) & \text{if 1 is a part of } \lambda, \end{cases}
\end{aligned}\end{equation}	
where $\mu(\lambda)$ denotes the number of parts of $\lambda$ strictly larger than the number of $1$s in $\lambda$, and $o(\lambda)$ denotes the number of $1$s in $\lambda$. They used the crank to combinatorially explain all three of Ramanujan's congruences for $p(n)$.

Partitions have also been shown to satisfy many interesting inequalities. For example, DeSalvo and Pak \cite{DeSalvo} showed that $p(n)$ satisifies the log-concavity inequality
\begin{align*}
	p(n)^2 > p(n+1)p(n-1),
\end{align*}
for all $n\geq 26$. Log concavity can also be viewed as the order two Tur\'{a}n inequality. The higher-order Tur\'{a}n inequalities for $p(n)$ (and its variants) have been studied in the literature since the major breakthrough paper \cite{GORZ}, which proved that the Jensen polynomial associated to the Fourier coefficients of any (weakly holomorphic) modular forms is asymptotically hyperbolic, in turn meaning that the Fourier coefficients asymptotically satisfy all higher order Tur\'{a}n inequalities. There are a plethora of further inequalities satisfied by $p(n)$, for example the Bessenrodt-Ono inequality \cite{BO}
\begin{align*}
	p(n_1)p(n_2) \geq p(n_1+n_2)
\end{align*}
for all $n_1,n_2 \in \N$ such that $n_1+n_2 > 8$. In \cite{Bal}, Bal, Haraldson, Thompson and the second author showed that any sequence arising from functions which satisfy the hypotheses of Proposition \ref{WrightCircleMethod} (which are not necessarily modular forms) also satisfy a Bessenrodt-Ono type inequality as well as all higher-order Tur\'{a}n inequalities. We make use of these results to immediately conclude similar statements on the objects of our study.

There are many variants of partitions in the literature, and here we focus on a recent family introduced by Wagner \cite{Wag} called $(k,j)$-coloured overpartitions, where $0 < j \leq k$. These are coloured partitions of $n$ where the first occurrence of any number of $j$ of the colours may be overlined, and we denote their count by $\overline{p}_{k,j}(n)$. In what follows, we imitate the three themes of study for $p(n)$ highlighted above for this new family of partitions.

Of particular importance to us is the generating function (the reason we call this $H(1;q)$ will soon become apparent)
\begin{align}\label{eqn: gen fn}
H(1;q) \coloneqq \sum_{n\geq 0} \overline{p}_{k,j}(n) q^n = \prod_{n \geq 1} \frac{\left(1+q^n\right)^j}{\left( 1-q^n \right)^k},
\end{align}
with $q \coloneqq e^{2\pi i z}$ for $z \in \H$, the upper half-plane. Our first main result imitates the asymptotic formula \eqref{eqn: asymp p} but for this new family of partitions, where we use Wright's variant of the Circle Method in order to obtain an error term.

\begin{thm}\label{Thm: main1}
	Let $\overline{p}_{k,j}(n)$ count the number of $(k,j)$-coloured overpartitions of $n$. Then as $n\to \infty$ we have
	\begin{align*}
		\overline{p}_{k,j}(n) = \frac{  (2k+j)^{\frac{k+1}{4}} }{  2^{\frac{2k+j+3}{2}}  3^{\frac{k+1}{4}}  n^{\frac{k+3}{4}}   } e^{\pi \sqrt{\frac{(2k+j)n}{3}}} \left(1+O\left(n^{-\frac{1}{2}} \right)\right) \eqqcolon C(k,j;n) \left(1+O\left(n^{-\frac{1}{2}} \right)\right).
	\end{align*}
\end{thm}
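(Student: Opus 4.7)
The plan is to apply Proposition~\ref{WrightCircleMethod}, which requires two inputs: a precise asymptotic expansion of $H(1;q)$ as $q\to 1$ radially, and a uniform upper bound showing that $\vt{H(1;q)}$ is much smaller away from $q=1$ on the circle of integration.

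First, I would rewrite the product in \eqref{eqn: gen fn} in terms of the Dedekind eta function by using $\prod_{n\geq 1}(1-q^n) = q^{-1/24}\eta(z)$ and $\prod_{n\geq 1}(1+q^n) = q^{-1/24}\eta(2z)/\eta(z)$, giving
\[ H(1;q) = q^{\frac{k-j}{24}}\, \frac{\eta(2z)^{j}}{\eta(z)^{k+j}}. \]
Writing $q = e^{-t}$ with $t>0$ and applying the standard modular inversions $\eta(-1/z)=\sqrt{-iz}\,\eta(z)$ and $\eta(-1/(2z))=\sqrt{-2iz}\,\eta(2z)$ to each factor yields
\[ H(1; e^{-t}) \sim \frac{t^{k/2}}{2^{\frac{k+j}{2}}\,\pi^{k/2}}\, \exp\!\left(\frac{(2k+j)\pi^{2}}{12 t}\right) \qquad (t \to 0^{+}). \]
This extracts the triple $A = 2^{-(k+j)/2}\pi^{-k/2}$, $\alpha = k/2$, $B = (2k+j)\pi^{2}/12$ required as input to Wright's method.

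To check the minor-arc hypothesis, I would parameterise $q = e^{-t + iy}$ with $y$ bounded away from $0$ modulo $2\pi$ and use the same eta inversions to show that the real part of the resulting exponent is strictly smaller than $B/t$, uniformly in $y$ on the relevant range, so that $\vt{H(1;q)}$ is dominated by the main-term estimate by a polynomial factor. Equivalently, one can bound $\sum_{n\geq 1}\log\vt{1-q^n}$ and $\sum_{n\geq 1}\log\vt{1+q^n}$ by expanding the logarithms into Fourier series, mirroring the classical Wright argument for $p(n)$.

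Finally, plugging $(A,\alpha,B)$ into Proposition~\ref{WrightCircleMethod} gives
\[ \overline{p}_{k,j}(n) = \frac{A}{2\sqrt{\pi}}\, B^{\frac{\alpha}{2} + \frac{1}{4}}\, n^{-\frac{\alpha}{2} - \frac{3}{4}}\, e^{2\sqrt{Bn}}\!\left(1 + O\!\left(n^{-\frac{1}{2}}\right)\right), \]
and a direct bookkeeping of powers of $2$, $3$, and $\pi$ (noting that $2\sqrt{Bn} = \pi\sqrt{(2k+j)n/3}$ and $B^{(k+1)/4} = (2k+j)^{(k+1)/4}\pi^{(k+1)/2}/(2^{(k+1)/2}3^{(k+1)/4})$) recovers $C(k,j;n)$, together with the claimed error term. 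The hard part, I expect, is not the major-arc calculation (a mechanical eta transformation) but rather verifying the minor-arc bound uniformly for this specific double product; once that is in place the rest is arithmetic.
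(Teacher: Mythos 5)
Your proposal is correct and follows essentially the same route as the paper: both rewrite the generating function as a product of Pochhammer symbols (equivalently, an eta quotient), apply the modular transformation on the major arc to extract the parameters $A = \tfrac{(2k+j)\pi^2}{12}$, $B = \tfrac{k}{2}$, $\alpha_0 = 2^{-(k+j)/2}(\pi)^{-k/2}$ for Proposition~\ref{WrightCircleMethod}, cite standard minor-arc bounds (the paper uses \eqref{eqn: q-poch minor} from \cite{BD}), and then do the power-of-$2$, $3$, $\pi$ bookkeeping. Your arithmetic checks out, and your eta-quotient presentation $H(1;q)=q^{(k-j)/24}\eta(2z)^j/\eta(z)^{k+j}$ is just a cosmetic repackaging of the paper's use of \eqref{Eqn: transform usual pochham}.
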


We illustrate these numerics in the Table \ref{Tab: sampleratios} where, for several choices of $(k,j)$ and $n$, we give the ratio $\frac{\overline{p}_{k,j}(n)}{C(k,j;n)}$.

\begin{table}[h]
	\centering
	\begin{tabular}{c |c|c|c|c}
		${\bf (k, j)}$ & ${\bf n = 100}$ & $ {\bf n = 1000}$ & ${\bf n = 2000}$ & ${\bf n = 5000}$\\
		\hline
		$(1, 1)$  & 0.835\dots & 0.943\dots & 0.959\dots & 0.974\dots \\
		$(2, 1)$  & 0.782\dots & 0.923\dots & 0.945\dots & 0.964\dots \\
		$(3, 1)$  & 0.735\dots & 0.904\dots & 0.931\dots & 0.956\dots \\
		$(3, 2)$  & 0.732\dots & 0.903\dots & 0.930\dots & 0.955\dots \\
		$(5, 3)$  & 0.653\dots & 0.870\dots & 0.906\dots & 0.939\dots
	\end{tabular}
	\caption{Sample values for $\frac{\overline{p}_{k,j}(n)}{C(k,j;n)}$ for several choices of $(k,j)$ and $n$.}
	\label{Tab: sampleratios}
\end{table}

In Theorems 4.17 and 4.18 of \cite{Wag}, using the theory of complex multiplication for modular forms (lifted to Jacobi forms), Wagner showed that $(k,j)$-coloured overpartitions satisfy many Ramanujan-type congruences, i.e. congruences of the shape
\begin{align}\label{eqn: Ram cong}
	\overline{p}_{k,j}(\ell n + \delta_{k,j,\ell} ) \equiv 0 \pmod{\ell}
\end{align}
for primes $\ell \geq 5$ and $\delta_{k,j,\ell}  \in \N$. Moreover, Wagner showed how to construct many crank statistics (in the guise of their generating functions) which combinatorially explain a given instance of \eqref{eqn: Ram cong}. To state these results in a more convenient way, we use the following notation where $\zeta$ is a root of unity
\begin{align*}
	F_1(\zeta;q) \coloneqq \prod_{n \geq 1} \left(1-\zeta q^n \right),
\end{align*}
along with shorthand notation $\pm$inside functions to indicate that one should take the product of the function with both choices.

 He gave a general procedure of how to produce crank generating functions in terms of $F_1$ and functions labelled $\widetilde{\phi}_r(\zeta_1,\zeta_2;q)$ which are certain Jacobi forms with prefactors removed. In general, the two-variable crank generating functions have the form (see \cite[Section 4.3]{Wag})
\begin{align*}
	\frac{\widetilde{\phi}_r\left(\zeta^a; \zeta^b; q\right)}{ \left[F_1\left(1;q \right) F_1\left(\zeta^{\pm a_1};q\right) F_1 \left(\zeta^{\pm a_2};q\right) \cdots F_1\left( \zeta^{\pm a_{\frac{\ell-1}{2}}}; q \right) \right]^j},
\end{align*}
with $r=4,6,10$ and where $(\pm a_1, \pm a_2,\dots, \pm a_{\frac{\ell-1}{2}})$ form a complete set of residues modulo $\ell$. Wagner requires some cancellation between the numerator and denominator in order to call the statistic defined by such a generating function a crank \footnote{We assume that the numerator does not completely cancel the denominator.}. This generating function inherently defines a crank statistic $c$. Moreover, since we have some choice in the cancellation that occurs, let us assume that choosing a fixed crank statistic $c$, we may rewrite our two-variable crank generating function (after cancellation) as
\begin{align}\label{eqn: crank as F_1}
H^c(\zeta;q) \coloneqq \prod_{j=1}^r \frac{  F_1(1; q^{b_j})^{c_j}   }{F_1(\zeta^{\pm d_j};q)^{f_j}},
\end{align}
with $c_j,d_j,f_j\in \N_0$ and $b_j, r \in \N$.

In particular, $H^c$ is completely in terms of the functions $F_1$ which were studied by Bringmann, Craig, Ono, and the second author in \cite{BCMO}. Also recall that since $c$ defines a crank statistic, we must have that $H^c(1;q) = \sum_{n \geq 0} \overline{p}_{k,j}(n) q^n$.

For example, Wagner proved that for $(3,2)$-coloured overpartitions a crank that combinatorially explains the Ramanujan-type congruence modulo $7$ has the two-variable generating function
\begin{align*}
	\prod_{n \geq 1} \frac{\left(1-q^n \right) \left(1+q^{n}\right)^2}{  \left( 1-\zeta^{\pm 1} q^n \right) \left( 1-\zeta^{\pm 2} q^n \right)  } = \frac{ F_1(1;q^2)^2}{F_1(1;q) F_1(\zeta^{\pm 1};q) F_1(\zeta^{\pm 2};q) }.
\end{align*}

Let $c$ be any crank statistic of Wagner as defined above and assume that $H^c(\zeta;q)$ takes the form \eqref{eqn: crank as F_1} such that $\gcd(d_1,d_2,\dots,d_r)=1$. Our second main theorem is the following equidistribution result (for any modulus $b >1$), which combined with Theorem \ref{Thm: main1} gives the asymptotic distribution for all residue classes in any modulus. Let $ \overline{p}^c_{k,j}(a,b;n)$ count the number of of $(k,j)$-coloured overpartitions of $n$ with fixed crank statistic $c$ equivalent to $a \pmod{b}$.

\begin{thm}\label{Thm: Main2}
	As $n \to \infty$ we have
	\begin{align*}
		\overline{p}^c_{k,j}(a,b;n) \sim \frac{1}{b} \overline{p}_{k,j}(n).
	\end{align*}
Moreover, Theorem \ref{Thm: main1} then implies that
\begin{align*}
	\overline{p}^c_{k,j}(a,b;n) = \frac{C(k,j;n)}{b} \left(1+O\left(n^{-\frac{1}{2}} \right)\right).
\end{align*}
\end{thm}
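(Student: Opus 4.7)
The approach is a standard roots-of-unity dissection combined with Wright's Circle Method. Let $\zeta_b \coloneqq e^{2\pi i /b}$. By orthogonality,
\begin{align*}
\overline{p}^c_{k,j}(a,b;n) \; = \; \frac{1}{b}\sum_{h=0}^{b-1} \zeta_b^{-ah}\,[q^n]\,H^c(\zeta_b^h;q).
\end{align*}
The $h=0$ term contributes $\frac{1}{b}\overline{p}_{k,j}(n)$, which by Theorem \ref{Thm: main1} has size $C(k,j;n)/b$ and delivers both the main term and the claimed error. It therefore suffices to show that for each $1\le h\le b-1$ the coefficient $[q^n]H^c(\zeta_b^h;q)$ is exponentially smaller than $\overline{p}_{k,j}(n)$.

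To obtain the necessary bound I would analyse $H^c(\zeta;q)$ near the dominant cusp $q=1$ using the classical dilogarithm expansion $\log F_1(\zeta;e^{-t}) \sim -\operatorname{Li}_2(\zeta)/t$ as $t \to 0^+$ for any $\zeta$ on the unit circle, combined with the refined asymptotics for $F_1$ at roots of unity developed by Bringmann, Craig, Ono and the second author in \cite{BCMO}. Using the standard identity $2\operatorname{Re}\operatorname{Li}_2(e^{2\pi i u}) = 2\pi^2 B_2(\{u\})$, the form \eqref{eqn: crank as F_1} gives the leading-order expansion
\begin{align*}
\log H^c(\zeta_b^h;e^{-t}) \; \sim \; \frac{1}{t}\left[\frac{\pi^2(2k+j)}{12} \; + \; 2\pi^2 \sum_{i=1}^{r} f_i\bigl( B_2(\{hd_i/b\}) - \tfrac{1}{6} \bigr)\right],
\end{align*}
where the bracketed constant at $h=0$ recovers the exponent governing $\overline{p}_{k,j}(n)$ from Theorem \ref{Thm: main1} (via the consistency $H^c(1;q)=\sum \overline{p}_{k,j}(n)q^n$), and the sum on the right is the correction from evaluating at $\zeta_b^h$.

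The arithmetic input is the identity $B_2(u) - \tfrac{1}{6} = u(u-1)$, which is non-positive on $[0,1]$ and vanishes only at the endpoints. For $1\le h\le b-1$ one has $B_2(\{hd_i/b\}) = \tfrac{1}{6}$ precisely when $b/\gcd(h,b)$ divides $d_i$; the hypothesis $\gcd(d_1,\dots,d_r)=1$ together with $0<h<b$ prevents this from happening simultaneously for all $i$. Hence each $h\neq 0$ produces a strictly smaller leading exponent $A_h < \pi^2(2k+j)/12$, and applying Wright's Circle Method (Proposition \ref{WrightCircleMethod}) to $H^c(\zeta_b^h;q)$ yields a bound of the form
\begin{align*}
[q^n]H^c(\zeta_b^h;q) \; \ll \; e^{\alpha_h \sqrt{n}}, \qquad \alpha_h < \pi\sqrt{\tfrac{2k+j}{3}}.
\end{align*}
Summing over $1\le h\le b-1$ gives a total contribution exponentially smaller than $\overline{p}_{k,j}(n)$, proving $\overline{p}^c_{k,j}(a,b;n)\sim \frac{1}{b}\overline{p}_{k,j}(n)$; the explicit error term in the second assertion then follows directly from Theorem \ref{Thm: main1}.

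The main technical obstacle will be verifying the hypotheses of the Wright Circle Method uniformly in $h$, in particular the minor-arc estimates on $|H^c(\zeta_b^h;q)|$ away from a shrinking neighbourhood of $q=1$. These reduce to the corresponding bounds on the building blocks $F_1(\omega;q)$ for $\omega$ a root of unity, which are exactly the estimates established in \cite{BCMO}; since $H^c(\zeta;q)$ is a finite product and quotient of such factors at shifted arguments, the bounds transfer cleanly. Once in hand, the combinatorial input — the strict concavity of $B_2$ on $(0,1)$ combined with $\gcd(d_1,\dots,d_r)=1$ — supplies the strict exponential separation required for equidistribution.
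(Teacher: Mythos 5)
Your proposal follows essentially the same route as the paper: multisection by roots of unity, asymptotics of $F_1(\zeta;q)$ at roots of unity via \cite{BCMO}, the strict inequality $\operatorname{Re}\operatorname{Li}_2(\xi) < \pi^2/6$ for a root of unity $\xi\neq 1$, the $\gcd$ hypothesis forcing at least one denominator factor to genuinely change, and Wright's Circle Method to close. The only cosmetic difference is that you encode the dilogarithm inequality via the Bernoulli polynomial identity $2\operatorname{Re}\operatorname{Li}_2(e^{2\pi i u})=2\pi^2 B_2(\{u\})$ and then use $B_2(u)-\tfrac16 = u(u-1)\le 0$, whereas the paper quotes $\operatorname{Re}\operatorname{Li}_2(e^{i\theta})=\tfrac{\pi^2}{6}-\tfrac{\theta(2\pi-\theta)}{4}$ directly; these are the same fact. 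Your version is arguably slightly more explicit in writing down the full leading exponent of $\log H^c(\zeta_b^h;e^{-t})$ and in spelling out exactly when $B_2(\{hd_i/b\})=\tfrac16$ (namely when $b/\gcd(h,b)\mid d_i$), but the underlying argument and all the inputs are identical to the paper's.
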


The fact that we require $\gcd(d_1,d_2,\dots,d_r)=1$ is purely for convenience. One may check that our techniques hold in the case where the $\gcd >1$, and that one simply needs to take care to add further terms in the multisection that contribute to the main term asymptotic correctly (similar to e.g. \cite[Theorem 1.4]{BCMO} or \cite[Theorem 1.5]{CCM}).

Given Theorems \ref{Thm: main1} and \ref{Thm: Main2} we are also able to immediately conclude asymptotic inequalities that the coefficients $\overline{p}^c_{k,j}(n)$ and $\overline{p}^c_{k,j}(a,b;n)$ satisfy, in a similar vein to various other papers in the literature. In particular, using  \cite[Corollaries 3.2 and 3.3]{CCM} and \cite[Theorem 1.2]{Bal} we obtain the following corollaries. The second is also a direct consequence of the generating function for $\overline{p}_{k,j}(n)$ being (essentially) a modular form, and an application of \cite[Theorem 3]{GORZ} with Theorem \ref{Thm: Main2}.

\begin{cor}
		For large enough $n_1$ and $n_2$, we have that
	\begin{align*}
		\overline{p}^c_{k,j}(a,b;n_1)\overline{p}^c_{k,j}(a,b;n_2) > \overline{p}_{k,j}(a,b;n_1+n_2),
	\end{align*}
	and 
	\begin{align*}
		\overline{p}_{k,j}(n_1) \overline{p}_{k,j}(n_2) > \overline{p}_{k,j}(n_1+n_2).
	\end{align*}
\end{cor}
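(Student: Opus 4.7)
The plan is to derive both inequalities as direct consequences of the cited abstract Bessenrodt--Ono type results fed by the asymptotics established in Theorems \ref{Thm: main1} and \ref{Thm: Main2}. Both \cite[Theorem 1.2]{Bal} and \cite[Corollaries 3.2 and 3.3]{CCM} are designed to accept a sequence of non-negative integers whose asymptotic behaviour matches the single-exponential Wright shape
\begin{align*}
a(n) = \frac{A}{n^{\alpha}}\, e^{\beta \sqrt{n}}\left(1+O\left(n^{-\frac{1}{2}}\right)\right),
\end{align*}
and output a Bessenrodt--Ono inequality $a(n_1) a(n_2) > a(n_1+n_2)$ valid once $n_1$ and $n_2$ are sufficiently large. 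By Theorem \ref{Thm: main1}, $\overline{p}_{k,j}(n)$ fits this template with $\alpha = (k+3)/4$ and $\beta = \pi\sqrt{(2k+j)/3}$, and by Theorem \ref{Thm: Main2} so does $\overline{p}^c_{k,j}(a,b;n)$, with the same $\alpha$ and $\beta$ but with an additional factor $1/b$ in the leading constant.

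For transparency, the underlying mechanism is the elementary observation that
\begin{align*}
\frac{\overline{p}_{k,j}(n_1)\, \overline{p}_{k,j}(n_2)}{\overline{p}_{k,j}(n_1+n_2)} = A\, \frac{(n_1+n_2)^{\alpha}}{(n_1 n_2)^{\alpha}}\, e^{\beta\left(\sqrt{n_1}+\sqrt{n_2}-\sqrt{n_1+n_2}\right)}\left(1+O\left(n^{-\frac{1}{2}}\right)\right),
\end{align*}
where we abbreviate $n \coloneqq \min(n_1, n_2)$. Since $(\sqrt{n_1}+\sqrt{n_2})^2 - (n_1+n_2) = 2\sqrt{n_1 n_2}$, the exponent $\sqrt{n_1}+\sqrt{n_2}-\sqrt{n_1+n_2}$ is strictly positive and grows at least like $\sqrt{n}$ as $n_1, n_2 \to \infty$. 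The exponential factor therefore swallows the polynomial ratio and the $O(n^{-1/2})$ correction, giving the inequality for all sufficiently large $n_1$ and $n_2$. The argument for $\overline{p}^c_{k,j}(a, b; n)$ is identical, since the factor $1/b$ modifies only the constant $A$ and leaves the driving exponent untouched.

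Rather than a serious obstacle, the only thing to verify is that our asymptotics match the precise hypotheses of \cite[Theorem 1.2]{Bal} and \cite[Corollaries 3.2 and 3.3]{CCM}; these are stated for exactly the Wright-type shape above and so are met on the nose. Alternatively, the unrestricted inequality can be proved directly from Theorem \ref{Thm: main1} via the elementary calculation just indicated, with no recourse to the abstract framework.
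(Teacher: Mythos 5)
Your proposal is correct and follows essentially the same route as the paper, which simply invokes \cite[Corollaries 3.2 and 3.3]{CCM} and \cite[Theorem 1.2]{Bal} once the Wright-type asymptotics of Theorems \ref{Thm: main1} and \ref{Thm: Main2} are in hand. The supplementary elementary computation you include (the ratio $A\,\frac{(n_1+n_2)^{\alpha}}{(n_1 n_2)^{\alpha}}\,e^{\beta(\sqrt{n_1}+\sqrt{n_2}-\sqrt{n_1+n_2})}$ tending to infinity because $\sqrt{n_1}+\sqrt{n_2}-\sqrt{n_1+n_2}\gg \sqrt{\min(n_1,n_2)}$) is a correct and transparent verification of the mechanism behind those cited results, though the paper itself does not spell it out.
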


\begin{cor}
	For large enough $n$, both $\overline{p}_{k,j}(n)$ and $\overline{p}^c_{k,j}(a,b;n)$ satisfy all higher-order Tur\'{a}n inequalities.
\end{cor}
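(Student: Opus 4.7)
The plan is to invoke the abstract Turán-type results already cited in the paragraph preceding the corollary, applied separately to each of the two sequences. For $\overline{p}_{k,j}(n)$, the quickest route is to rewrite \eqref{eqn: gen fn} using the identity $1+q^n = (1-q^{2n})/(1-q^n)$ as
\begin{align*}
H(1;q) = \prod_{n\geq 1} \frac{(1-q^{2n})^j}{(1-q^n)^{k+j}}.
\end{align*}
Up to a rational power of $q$, this is an eta-quotient on a congruence subgroup and hence essentially a weakly holomorphic modular form. One then invokes \cite[Theorem 3]{GORZ}, which asserts that the Jensen polynomials attached to the Fourier coefficients of such a form are asymptotically hyperbolic; since hyperbolicity of the degree-$d$ Jensen polynomial is equivalent to the degree-$d$ Turán inequality, this settles the claim for $\overline{p}_{k,j}(n)$.

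For $\overline{p}^c_{k,j}(a,b;n)$ the plan is to feed Theorem \ref{Thm: Main2} directly into the machinery of \cite[Theorem 1.2]{Bal} (equivalently \cite[Corollaries 3.2 and 3.3]{CCM}). That theorem supplies
\begin{align*}
\overline{p}^c_{k,j}(a,b;n) = \frac{C(k,j;n)}{b}\left(1+O\left(n^{-\frac{1}{2}}\right)\right),
\end{align*}
which is exactly the asymptotic shape required by those abstract results. The positive rescaling by $1/b$ is immaterial since the Jensen polynomial of $\alpha a_n$ is hyperbolic if and only if that of $a_n$ is, for any $\alpha>0$. The conclusion for $\overline{p}^c_{k,j}(a,b;n)$ then follows. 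The same route, this time using only Theorem \ref{Thm: main1}, also recovers the claim for $\overline{p}_{k,j}(n)$ without appealing to modularity, providing an alternative to the GORZ approach.

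The step requiring the most care will be verifying that the parameters appearing in Theorems \ref{Thm: main1} and \ref{Thm: Main2} (the leading constant, the exponential rate $\pi\sqrt{(2k+j)/3}$, the polynomial prefactor of order $n^{-(k+3)/4}$, and the $O(n^{-1/2})$ error) all lie within the admissible ranges of the cited abstract theorems. Because the asymptotics are explicit with clean rational parameters inherited from the product \eqref{eqn: gen fn}, this is bookkeeping rather than a genuine analytic obstacle, and no further input beyond our two main theorems is needed.
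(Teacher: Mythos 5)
Your proposal takes essentially the same route as the paper, which simply defers to the abstract machinery of \cite[Theorem 1.2]{Bal} (equivalently \cite[Corollaries 3.2, 3.3]{CCM}) applied to the asymptotics of Theorems \ref{Thm: main1} and \ref{Thm: Main2}, and separately notes the alternative of using modularity of the eta-quotient $H(1;q)$ with \cite[Theorem 3]{GORZ} — both of which you reproduce. One small wording slip: it is Theorem \ref{Thm: Main2} (not \cite[Theorem 1.2]{Bal}) that supplies the asymptotic $\overline{p}^c_{k,j}(a,b;n)=\tfrac{C(k,j;n)}{b}(1+O(n^{-1/2}))$; more precisely, what one actually feeds into \cite{Bal} is the fact (established in the proof of Theorem \ref{Thm: Main2}) that $H^c(a,b;q)$ satisfies the hypotheses of Proposition \ref{WrightCircleMethod}, from which the Tur\'an inequalities then follow.
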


Moreover, we also immediately obtain the following corollary via \cite[Theorem 1.4]{Wagner}, since the main term asymptotic for both $\overline{p}_{k,j}(n)$ and $\overline{p}^c_{k,j}(a,b;n)$ arise as Taylor coefficients of a suitable modular form (see \cite{Wagner} for more background on Laguerre inequalities).
\begin{cor}
		For large enough $n$, both $\overline{p}_{k,j}(n)$ and $\overline{p}^c_{k,j}(a,b;n)$ satisfy all higher-order Laguerre inequalities.
\end{cor}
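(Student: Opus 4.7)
The plan is to follow the remark preceding the statement: both sequences have dominant asymptotics coming from Taylor coefficients of a genuine modular form, so \cite[Theorem 1.4]{Wagner} applies once this modularity is identified.

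First I would exhibit the modular form underlying $\overline{p}_{k,j}(n)$. Using the identity $1+q^n = (1-q^{2n})/(1-q^n)$, one rewrites
\begin{align*}
H(1;q) = \prod_{n\geq 1}\frac{(1-q^{2n})^j}{(1-q^n)^{k+j}},
\end{align*}
which, after multiplication by the appropriate rational power of $q$, coincides with the eta quotient $\eta(2z)^j/\eta(z)^{k+j}$. This is a weakly holomorphic modular form of weight $-k/2$ on some congruence subgroup, so \cite[Theorem 1.4]{Wagner} applies directly to produce all higher-order Laguerre inequalities for $\overline{p}_{k,j}(n)$ once $n$ is sufficiently large.

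For the refinement to arithmetic progressions, I would multisect by a primitive $b$-th root of unity $\zeta_b$:
\begin{align*}
\sum_{n\geq 0}\overline{p}^c_{k,j}(a,b;n)q^n = \frac{1}{b}\sum_{m=0}^{b-1}\zeta_b^{-am} H^c(\zeta_b^m;q).
\end{align*}
The $m=0$ term equals $\tfrac{1}{b}H(1;q)$, a modular object whose coefficients deliver the dominant asymptotic $\tfrac{1}{b}C(k,j;n)$. The terms with $m\neq 0$ are exponentially smaller than this main term by the asymptotic analysis underlying Theorem \ref{Thm: Main2} (which itself rests on the cusp behaviour of $F_1(\zeta;q)$ worked out in \cite{BCMO}). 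Applying \cite[Theorem 1.4]{Wagner} to the modular piece $\tfrac{1}{b}H(1;q)$ then gives the desired Laguerre inequalities for $\overline{p}^c_{k,j}(a,b;n)$.

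The main obstacle is controlling the error terms carefully enough to preserve each fixed higher-order Laguerre inequality, as these inequalities are polynomial expressions in shifted values of the sequence and are sensitive to subleading contributions. The $O(n^{-1/2})$ bound stated in Theorems \ref{Thm: main1}--\ref{Thm: Main2} is a priori too coarse, but the standard remedy is to extend the Wright circle method used for Theorem \ref{Thm: main1} to a full asymptotic expansion in descending half-integral powers of $n$; once this expansion is available, the exponentially smaller contributions from the $m\neq 0$ multisection terms are negligible and the modular-form machinery of \cite{Wagner} yields the inequalities uniformly for all large $n$.
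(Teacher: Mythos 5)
Your proposal is correct and follows the same route the paper indicates: identify the generating function with an eta quotient (hence a weakly holomorphic modular form), recognize that the $m=0$ term of the multisection supplies the modular-form-governed main asymptotic, note that the $m\neq 0$ terms are exponentially subdominant by the analysis behind Theorem \ref{Thm: Main2}, and then invoke \cite[Theorem 1.4]{Wagner}. The paper gives no explicit proof beyond the preceding remark, so your added care about the error term is a reasonable elaboration rather than a deviation; the worry is in any case resolved by the structure of the shifted Laguerre-P\'olya class in \cite{Wagner}, which is built precisely to absorb sub-polynomial (here exponentially small) corrections to a modular main term, and by the fact that Proposition \ref{WrightCircleMethod} already delivers the full descending expansion you propose to use.
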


\section{Preliminaries}
Here we recall some preliminaries required for the rest of the paper.

\subsection{Multisections}\label{Sec: multisections}
Fix a crank statistic $c$. Let
 \begin{align*}
	H^c(\zeta;q) \coloneqq \sum_{\substack{n \geq 0 \\ m \in \Z}} \overline{p}^c_{k,j}(m,n) \zeta^m q^n,
\end{align*}
where $\overline{p}^c_{k,j}(m,n)$ denotes the number of $(k,j)$-coloured overpartitions of $n$ with crank statistic $c$ precisely equal to $m$. 
A standard computation using orthogonality of roots of unity (sometimes called the multisection of the formal power series) gives
\begin{align}\label{eqn: splitting}
	H^c(a,b;q) \coloneqq	\sum_{n \geq 0} \overline{p}^c_{k,j}(a,b;n) q^n = \frac{1}{b} \sum_{n \geq 0} \overline{p}_{k,j}(n) q^n + \frac{1}{b} \sum_{k=1}^{b-1} \zeta_b^{-ak} 	H^c(\zeta_b^k;q).
\end{align}

To prove equidistribution results, we see that we need the first term on the right-hand side to be asymptotically dominant. This is the central idea used by Cesana, Craig, and the second author in \cite{CCM}, which provides a framework for proving equdisitribution results, building on the examples of (non)-equidistribution given in \cite{BCMO}. In the present paper, we follow similar ideas to those in \cite{BCMO,CCM} in proving Theorem \ref{Thm: Main2}.

\subsection{Asymptotics of infinite $q$-products} 
We require several estimates on the asymptotics of infinite $q$-products which appear in the generating functions in this paper. We begin by recalling the classical transformation formula of the Dedekind $\eta$-function, see e.g. see 5.8.1 of \cite{CS}, which implies that for $q = e^{2\pi i z}$ we have
\begin{align}\label{Eqn: transform usual pochham}
	(q;q)_\infty = z^{-\frac 12} e^{\frac{\pi}{12}\left( z - \frac{1}{z} \right)} (q_1;q_1)_\infty,
\end{align}
where $q_1 := e^{-\frac{2\pi i}{z}}$. This gives the well-known bound (for $|y|\le Mx$, as $z\to0$)
\begin{align}
\left(e^{-z};e^{-z} \right)_\infty^{-1} = \sqrt{\frac{z}{2\pi}} e^{\frac{\pi^2}{6z}} (1+O(|z|)).
\end{align}
We also require the classical bound on minor arcs (i.e. for $|y|\ge Mx$ as $z\to 0$), see e.g. \cite[Lemma 3.5]{BD}
\begin{align}\label{eqn: q-poch minor}
	\left|\left(e^{-z} ; e^{-z} \right)_\infty^{-1}\right| \le x^{\frac12} e^{\frac{\pi^2}{6x}-\frac{\mathcal{C}}{x}},
\end{align}
for some constant $C>0$. 

Let Lerch's transcendent be denoted by
\begin{align*}
	\Phi(z,s,a)\coloneqq \sum_{n=0}^\infty \frac{z^n}{(n+a)^s},
\end{align*}
and for $0\leq \theta < \frac{\pi}{2}$ define the domain $D_{\theta} \coloneqq \left\{ z=re^{i\alpha} \colon r \geq 0 \text{ and } |\alpha| \leq \theta \right\}$, which cuts out a cone in the right half-plane. Throughout, the Gamma function is defined as usual by $\Gamma(x)\coloneqq \int_0^\infty t^{x-1} e^{-t} dt $, for $\operatorname{Re}(x)>0$.
Then \cite[Theorem 2.1]{BCMO} is the following result, which will enable us to estimate $F_1$ on major arcs when we apply Wright's Circle Method.
\begin{thm} \label{Thm: asymptotics F}
	For $b\geq 2$, let $\zeta$ be a primitive $b$-th root of unity. Then as $z \to 0$ in $D_\theta$, we have 
	\begin{align*}
		F_{1}\left(\zeta;e^{-z}\right)  =\frac{1}{\sqrt{1-\zeta}} \, e^{-\frac{\zeta\Phi(\zeta,2,1)}{z}}\left( 1+O\left(|z|\right) \right).
	\end{align*}
\end{thm}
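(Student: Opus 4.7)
The plan is to prove the asymptotic by the standard Mellin--Barnes approach. First I would take logarithms and expand the resulting double sum as
\begin{align*}
-\log F_1(\zeta;e^{-z}) = \sum_{n\ge 1}\sum_{m\ge 1} \frac{\zeta^m e^{-mnz}}{m} = \sum_{m\ge 1} \frac{\zeta^m}{m\bigl(e^{mz}-1\bigr)},
\end{align*}
which converges absolutely for $z$ in $D_\theta$ since $\operatorname{Re}(z) \ge |z|\cos\theta>0$ and $|\zeta|=1$.

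Next I would invoke the Mellin representation $\frac{1}{e^{x}-1}=\frac{1}{2\pi i}\int_{(c)}\Gamma(s)\zeta(s) x^{-s}\,ds$ valid for $c>1$ to write
\begin{align*}
-\log F_1(\zeta;e^{-z}) = \frac{1}{2\pi i}\int_{c-i\infty}^{c+i\infty} \Gamma(s)\,\zeta(s)\,\operatorname{Li}_{s+1}(\zeta)\,z^{-s}\,ds,
\end{align*}
after interchanging the sum with the integral (justified by absolute convergence, using that $\operatorname{Li}_{s+1}(\zeta)$ is entire in $s$ when $\zeta\ne 1$, which holds since $b\ge 2$). I would then shift the contour leftwards past $s=1$ and $s=0$. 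The residue at $s=1$ comes from the simple pole of $\zeta(s)$ and equals $\operatorname{Li}_2(\zeta)/z = \zeta\,\Phi(\zeta,2,1)/z$; the residue at $s=0$ comes from $\Gamma(s)$ and equals $\zeta(0)\operatorname{Li}_1(\zeta) = \tfrac{1}{2}\log(1-\zeta)$.

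The main technical step—and the place requiring care—is the bound on the shifted integral. I would move the contour to a line $\operatorname{Re}(s)=-1+\varepsilon$ and estimate using Stirling's formula for $\Gamma(s)$ (giving exponential decay in $|\operatorname{Im}(s)|$), the polynomial bound for $\zeta(s)$ in vertical strips, and the fact that $\operatorname{Li}_{s+1}(\zeta)$ is of polynomial growth in $|\operatorname{Im}(s)|$ since $\zeta\ne 1$; together with the factor $z^{-s}$ satisfying $|z^{-s}|\le |z|^{1-\varepsilon} e^{\theta|\operatorname{Im}(s)|}$ in the cone $D_\theta$, the exponential decay of $\Gamma$ dominates (since $\theta<\pi/2$) and yields the bound $O(|z|)$.

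Combining, we obtain
\begin{align*}
-\log F_1(\zeta;e^{-z}) = \frac{\zeta\,\Phi(\zeta,2,1)}{z} + \frac{1}{2}\log(1-\zeta) + O(|z|),
\end{align*}
and exponentiating produces the claimed formula $F_1(\zeta;e^{-z}) = (1-\zeta)^{-1/2} e^{-\zeta\,\Phi(\zeta,2,1)/z}(1+O(|z|))$. The only genuine obstacle is the contour-shift estimate, which hinges on the cone condition $|\arg z|\le\theta<\pi/2$ to ensure that $e^{\theta|\operatorname{Im}(s)|}$ is beaten by the Stirling decay of $\Gamma(s)$; this is precisely why the asymptotic is stated in the cone $D_\theta$ rather than uniformly as $z\to 0$.
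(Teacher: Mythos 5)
This statement is imported verbatim from Theorem~2.1 of Bringmann--Craig--Males--Ono (reference \cite{BCMO}); the present paper does not supply a proof of its own, so there is no internal argument to compare against. That said, your Mellin--Barnes derivation is a standard and valid way to establish the result. You correctly obtain $\operatorname{Li}_2(\zeta)/z=\zeta\,\Phi(\zeta,2,1)/z$ from the residue of $\zeta(s)$ at $s=1$, and $\tfrac{1}{2}\log(1-\zeta)$ from the residue of $\Gamma(s)$ at $s=0$ (via $\zeta(0)=-\tfrac12$ and $\operatorname{Li}_1(\zeta)=-\log(1-\zeta)$). The supporting points---absolute convergence permitting the interchange of sum and integral, the entirety and polynomial growth of $\operatorname{Li}_{s+1}(\zeta)$ in vertical strips for $\zeta\ne1$ (visible by expressing it as a linear combination of Hurwitz zeta functions, whose residues at $s=1$ cancel precisely because $\zeta\ne1$), and the fact that $\theta<\pi/2$ makes Stirling's decay of $\Gamma(s)$ dominate $e^{\theta|\operatorname{Im}(s)|}$---are all correct.

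There is one small but genuine slip in your error estimate. Moving the contour only to $\operatorname{Re}(s)=-1+\varepsilon$ gives $|z^{-s}|=|z|^{1-\varepsilon}e^{\operatorname{Im}(s)\arg z}$, so the shifted integral is $O(|z|^{1-\varepsilon})$, which for any $\varepsilon>0$ falls short of the claimed $O(|z|)$. To obtain the stated error you should push past the pole of $\Gamma(s)$ at $s=-1$---a genuine pole of the integrand, since $\zeta(-1)=-\tfrac{1}{12}\ne0$ and $\operatorname{Li}_0(\zeta)=\zeta/(1-\zeta)\ne0$ for $\zeta\ne1$---collecting the residue $\tfrac{z\zeta}{12(1-\zeta)}=O(|z|)$, and then estimate on $\operatorname{Re}(s)=-\tfrac32$, say, where the integral is $O(|z|^{3/2})$. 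With that fix the remainder in $-\log F_1(\zeta;e^{-z})$ is indeed $O(|z|)$ and exponentiation yields exactly the stated asymptotic.
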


\subsection{Wright's Circle Method}

In order to obtain the asymptotic behaviour of our coefficients, we make use of Wright's Circle Method. The original Circle Method of Hardy and Ramanujan (extended by Rademacher) is extremely powerful, but requires a lot of input information and technical work. Wright developed an easier-to-use style of Circle Method, which requires less work, but trades this off against loss of information (more precisely, only obtaining an error term, and losing the possibility of an exact formula). One uses Cauchy's residue theorem to write the Fourier coefficients as a contour integral of the generating function over a circular contour $C$ of radius less than one. We pick a radius such that $C$ tends to the unit circle as $n\to\infty$. One then splits $C$ into arcs where the generating function has relatively large (resp.\@ small) asymptotic growth, called the major (resp.\@ minor) arcs. On the major arcs, we use asymptotic techniques to closely approximate the behaviour of the generating function, while on the minor arcs we bound more crudely. In \cite{BCMO}, following work of Ngo and Rhoades \cite{NgoRhoades}, the following result based on Wright's Circle Method was proved.

\begin{prop}[Proposition 4.4 of \cite{BCMO}] \label{WrightCircleMethod}
	Suppose that $F(q)$ is analytic for $q = e^{-z}$ where $z=x+iy \in \C$ satisfies $x > 0$ and $|y| < \pi$, and suppose that $F(q)$ has an expansion $F(q) = \sum_{n=0}^\infty c(n) q^n$ near 1. Let $c,N,M>0$ be fixed constants. Consider the following hypotheses:
	
	\begin{enumerate}[leftmargin=*]
		\item[\rm(1)] As $z\to 0$ in the bounded cone $|y|\le Mx$ (major arc), we have
		\begin{align*}
			F(e^{-z}) = z^{B} e^{\frac{A}{z}} \left( \sum_{j=0}^{N-1} \alpha_j z^j + O\left(|z|^N\right) \right),
		\end{align*}
		where $\alpha_s \in \C$, $A\in \R^+$, and $B \in \R$. 
		
		\item[\rm(2)] As $z\to0$ in the bounded cone $Mx\le|y| < \pi$ (minor arc), we have 
		\begin{align*}
			\lvert	F(e^{-z}) \rvert \ll e^{\frac{1}{\mathrm{Re}(z)}(A - \kappa)}.
		\end{align*}
		for some $\kappa\in \R^+$.
	\end{enumerate}
	If  {\rm(1)} and {\rm(2)} hold, then as $n \to \infty$ we have for any $N\in \R^+$ 
	\begin{align*}
		c(n) = n^{\frac{1}{4}(- 2B -3)}e^{2\sqrt{An}} \left( \sum\limits_{r=0}^{N-1} p_r n^{-\frac{r}{2}} + O\left(n^{-\frac N2}\right) \right),
	\end{align*}
	where $p_r := \sum\limits_{j=0}^r \alpha_j c_{j,r-j}$ and $c_{j,r} := \dfrac{(-\frac{1}{4\sqrt{A}})^r \sqrt{A}^{j + B + \frac 12}}{2\sqrt{\pi}} \dfrac{\Gamma(j + B + \frac 32 + r)}{r! \Gamma(j + B + \frac 32 - r)}$. 
\end{prop}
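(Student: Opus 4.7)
The plan is to apply Cauchy's residue theorem to extract $c(n)$ as a contour integral and then carry out a saddle-point analysis following Wright. Writing $q = e^{-z}$ with $z = x_n + iy$ for $y \in [-\pi,\pi)$, Cauchy gives
\[
c(n) = \frac{e^{nx_n}}{2\pi} \int_{-\pi}^{\pi} F\!\left(e^{-x_n - iy}\right) e^{iny} \, dy.
\]
The exponent appearing in $z^B e^{A/z+nz}$ has a real saddle on the positive axis at $x_n = \sqrt{A/n}$, obtained by solving $\frac{d}{dx}(nx + A/x)=0$. I would take this as the radius of the contour. The constant $M$ supplied by the hypotheses then cleanly splits the $y$-range into a major arc $|y| \le Mx_n$ and a minor arc $Mx_n \le |y| < \pi$.

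On the major arc I would insert the expansion from hypothesis (1), reducing the computation to integrals of the form
\[
I_s(n) \coloneqq \frac{1}{2\pi i} \int_{x_n - iMx_n}^{x_n + iMx_n} z^s e^{A/z + nz} \, dz
\]
for $s = B, B+1, \dots, B+N-1$, plus an error integral of size $O(x_n^N)$. Each $I_s(n)$ I would deform to a Hankel-type contour at exponentially small cost (the scale is exactly $x_n$, which is where the cone truncates), and then evaluate via the classical Bessel-function integral representation, or directly by expansion about the saddle. This produces the expected asymptotic shape $n^{-\frac{1}{4}(2s+3)} e^{2\sqrt{An}}$ times a series in $n^{-1/2}$ whose coefficients unwind to the Gamma-ratios $c_{j,r}$.

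On the minor arc, hypothesis (2) gives $|F(e^{-z})| \ll e^{(A-\kappa)/x_n}$, so its contribution to $c(n)$ is dominated by
\[
e^{nx_n + (A-\kappa)/x_n} = e^{2\sqrt{An} - \kappa\sqrt{n/A}},
\]
which is exponentially smaller than the main term. Combining the major-arc expansion with this negligible minor-arc estimate yields the stated formula, with $p_r = \sum_{j=0}^{r} \alpha_j c_{j,r-j}$ obtained by collecting like powers of $n^{-1/2}$. The main technical obstacle will be bookkeeping: matching the truncation error $O(|z|^N)$ against the Hankel-deformation tails so that the overall error is uniformly $O(n^{-N/2})$, and verifying that the explicit constants $c_{j,r}$ emerge in precisely the stated Gamma-ratio form. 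Since this proposition is already proved in \cite{BCMO} (building on Ngo--Rhoades \cite{NgoRhoades}), in practice I would simply invoke the cited statement rather than reproduce the computation.
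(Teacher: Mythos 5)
The paper gives no proof of this statement: it is quoted verbatim as Proposition 4.4 of \cite{BCMO} and used as a black box. Your sketch correctly reconstructs the Wright circle method argument in \cite{BCMO} (saddle at $x_n=\sqrt{A/n}$, Bessel-type major-arc integrals, exponentially small minor-arc bound), and your closing remark that one would simply invoke the cited result is precisely what the paper does.
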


This result means that one need only verify the two hypotheses in order to obtain the asymptotic behaviour of the coefficients at hand.

\section{Proofs of the main theorems}

In this section we prove our main results. We begin with the asymptotic behaviour of $(k,j)$-coloured overpartitions.

\begin{proof}[Proof of Theorem \ref{Thm: main1}]
First note that
 \begin{align}\label{p_k,j as pochhammer}
	\sum_{n \geq 0} \overline{p}_{k,j}(n) = \frac{(q^2;q^2)_\infty^j}{(q;q)_\infty^{k+j}} 
\end{align}
Then using \eqref{Eqn: transform usual pochham} for $q = e^{-z}$ with $|y|<Mx$ we have that
\begin{align*}
	\frac{(q^2;q^2)_\infty^j}{(q;q)_\infty^{k+j}}  = 2^{-\frac{j}{2}} \left(\frac{z}{2\pi}\right)^{\frac{k}{2}} \exp\left( \frac{\pi^2 (2k+j)}{12z} \right) \left(1+O\left(|z|\right)\right).
\end{align*}
Moreover, recall the classical bound on the minor arc in \eqref{eqn: q-poch minor}. Using this, it easily follows that \eqref{p_k,j as pochhammer} has a dominant asymptotic on the major arc (i.e. $|y|<Mx$). 

Therefore we may apply Proposition \ref{WrightCircleMethod} to \eqref{WrightCircleMethod} with the parameters $A = \frac{(2k+j)\pi^2}{12}$, $B= \frac{k}{2}$ and $\alpha_0 = \frac{1}{2^{\frac{j}{2}}(2\pi)^{\frac{k}{2}}}$. After tidying up coefficients, this yields the asymptotic
\begin{align*}
	\overline{p}_{k,j}(n) = \frac{  (2k+j)^{\frac{k+1}{4}} }{  2^{\frac{2k+j+3}{2}}  3^{\frac{k+1}{4}}  n^{\frac{k+3}{4}}   } e^{\pi \sqrt{\frac{(2k+j)n}{3}}} \left(1+O\left(n^{-\frac{1}{2}} \right)\right)
\end{align*}
as claimed.
\end{proof}

We next turn to the proof of Theorem \ref{Thm: Main2}.

\begin{proof}[Proof of Theorem \ref{Thm: Main2}]
	We start by using the discussion on multisections of power series given in Section \ref{Sec: multisections} to rewrite
	\begin{align*}
			H^c(a,b;q) = \sum_{n \geq 0} \overline{p}^c_{k,j}(a,b;n) q^n = \frac{1}{b} \sum_{n \geq 0} \overline{p}_{k,j}(n) q^n + \frac{1}{b} \sum_{k=1}^{b-1} \zeta_b^{-ak} 	H^c(\zeta_b^k;q).
	\end{align*}
Our aim is to show that on both the major and minor arcs we have 
\begin{align*}
	H^c(\zeta_b^k;q) < H^c(1;q).
\end{align*}
In turn, this means that $H^c(1;q)$ dominates the asymptotic growth of  $H^c(a,b;q)$, and since it does not depend on the residue class $a \pmod{b}$ we have asymptotic equidistribution. Furthermore, we know that $H^c(1;q)$ satisfies the hypotheses on Proposition \ref{WrightCircleMethod} and so we obtain the full asymptotic behaviour by applying Theorem \ref{Thm: main1}.

To see this, first note that by equation \eqref{eqn: crank as F_1} we have that
\begin{align}\label{eqn: multi}
	H^c(\zeta;q) \coloneqq \prod_{j=1}^r \frac{  F_1(1; q^{b_j})^{c_j}   }{F_1(\zeta^{\pm d_j};q)^{f_j}},
\end{align}
with $c_j,d_j,f_j\in \N_0$ and $b_j \in \N$. Recall that we assume that $\gcd(d_1,d_2,\dots,d_r)=1$. This in turn implies that for any choice of modulus $b$, there are no choices of $k$ such that $H(\zeta_b^k;q) = H(1;q)$, and so we cannot have two terms each contributing to the main asymptotic term\footnote{If we drop the condition on requiring the $\gcd$ to be $1$, one would need to be more careful in analysing which terms contribute to the main asymptotic term here. We would then get non-uniform asymptotics, but what is referred to in the literature as ``essentially'' equidistributed statistics.}. That is, there is always at least one term in $H^c(\zeta_b^k;q)$ of the form $F_1(\xi;q)^{-1}$ for $\xi$ a root of unity not equal to $1$.

We inspect this term in closer detail. By Theorem \ref{Thm: asymptotics F} we have that
\begin{align*}
			F_{1}\left(\xi;e^{-z}\right)^{-1}  =\sqrt{1-\xi} e^{\frac{\xi\Phi(\xi,2,1)}{z}}\left( 1+O\left(|z|\right) \right).
\end{align*}
Note that $\xi \Phi(\xi,2,1) = \operatorname{Li}_2(\xi)$, where $\operatorname{Li}_2$ is the usual dilogarithm function.  Then, in order for this term to give an exponentially smaller contribution to the asymptotics than its counterpart $F_1(1;q)^{-1}$ arising from the $k=0$ term of \eqref{eqn: multi}, we need that 
\begin{align*}
	\re\left(\operatorname{Li}_2(\xi)\right) < \frac{\pi^2}{6}
\end{align*}
for any root of unity $\xi \neq 1$. This follows from the fact that (see e.g. \cite[Page 11]{Zag})
\begin{align*}
		\re\left(\operatorname{Li}_2(e^{ i \theta})\right)  = \frac{\pi^2}{6} - \frac{\theta (2\pi -\theta)}{4}
\end{align*}
 with $0 \leq \theta \leq 2\pi$.
 
 Then it is clear that on the major and minor arcs we must have that $H^c(\zeta_b^k;q) < H^c(1;q)$ as desired. An application of Theorem \ref{Thm: main1} then implies that for any crank statistic $c$ we have
 \begin{align*}
 	\overline{p}^c_{k,j}(a,b;n) = \frac{C(k,j;n)}{b} \left(1+O\left(n^{-\frac{1}{2}} \right)\right),
 \end{align*}
as claimed.
\end{proof}

\section{Further discussion}
\subsection{The Laguerre-Polya class}

The family of functions whose coefficients are associated to Jensen polynomials that are hyperbolic is known as the Laguerre-Polya class of functions. In \cite{Wagner}, Wagner introduced a new family of related functions which he called the shifted Laguerre-Polya class. These can be characterised by their associated Jensen polynomials being asymptotically hyperbolic - precisely the condition which the authors of \cite{GORZ} used to show that the coefficients of weakly holomorphic modular forms are asymptotically hyperbolic, and which was extended in \cite{Bal} to include all functions (not necessarily modular) satisfying the hypotheses in Proposition \ref{WrightCircleMethod}.

It is thus clear that in the present paper, we give new infinite families of functions lying in the shifted Laguerre-Polya class of functions, arising from the coefficients $\overline{p}_{k,j}(n)$ and $\overline{p}_{k,j}(a,b;n)$, which we record in the following corollary.

\begin{cor}
	Choose $(k,j)$ such that $0<j\leq k$ and a crank function $c$ as above. For any $b \geq 2$ and $a\pmod{b}$, all of the functions
	\begin{align*}
		\sum_{n \geq 0} \frac{\overline{p}_{k,j}(n)}{n!} x^n, \qquad \sum_{n \geq 0} \frac{\overline{p}^c_{k,j}(a,b;n)}{n!} x^n
	\end{align*}
	lie in the shifted Laguerre-Polya class of functions.
\end{cor}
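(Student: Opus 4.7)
The plan is to reduce the corollary to the two main theorems via the framework of \cite{Bal}. As recalled in the preceding discussion, the shifted Laguerre-Polya class of \cite{Wagner} is characterised by the asymptotic hyperbolicity of the Jensen polynomials associated to the coefficient sequence, and by \cite[Theorem 1.2]{Bal} any coefficient sequence arising from a function that satisfies the two hypotheses of Proposition \ref{WrightCircleMethod} has asymptotically hyperbolic Jensen polynomials. Thus the entire proof reduces to verifying that both generating functions
\begin{align*}
	H(1;q) = \sum_{n \geq 0} \overline{p}_{k,j}(n) q^n \quad \text{and} \quad H^c(a,b;q) = \sum_{n \geq 0} \overline{p}^c_{k,j}(a,b;n) q^n
\end{align*}
satisfy hypotheses (1) and (2) of Proposition \ref{WrightCircleMethod}.

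For the first series this is immediate: the proof of Theorem \ref{Thm: main1} established the required major arc expansion using \eqref{Eqn: transform usual pochham} with parameters $A = \pi^2(2k+j)/12$ and $B = k/2$, and \eqref{eqn: q-poch minor} supplied the minor arc bound. Hence the exponential generating function of $\overline{p}_{k,j}(n)$ lies in the shifted Laguerre-Polya class.

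For the second series I would use the multisection identity \eqref{eqn: splitting} to write
\begin{align*}
	H^c(a,b;q) = \frac{1}{b}\, H^c(1;q) + \frac{1}{b} \sum_{k=1}^{b-1} \zeta_b^{-ak}\, H^c(\zeta_b^k;q),
\end{align*}
and show that the twisted summands contribute only an exponentially smaller error. On the major arc, the key calculation in the proof of Theorem \ref{Thm: Main2} --- based on the identity $\re\left(\operatorname{Li}_2(e^{i\theta})\right) = \pi^2/6 - \theta(2\pi-\theta)/4$ --- gives each $H^c(\zeta_b^k;q)$ an exponential leading factor $e^{A'/z}$ with $A' < A$, so that the contribution of the twisted terms is $o(|z|^N)$ for every $N$ and can be absorbed into the polynomial error of hypothesis (1). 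The leading expansion of $H^c(a,b;q)$ is then simply $\frac{1}{b}$ times that of $H^c(1;q)$, with the same $A$, $B$ and with $\alpha_0$ replaced by $\alpha_0/b$. On the minor arc, bounding each factor $F_1(\xi;q)^{-1}$ in the product \eqref{eqn: crank as F_1} via \eqref{eqn: q-poch minor} shows that every summand obeys the minor arc bound of hypothesis (2), and summing over $k$ preserves this up to the constant factor $b$.

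The main (mild) obstacle is the bookkeeping required to confirm that the exponentially small twisted contributions on the major arc can indeed be absorbed into the polynomial $O(|z|^N)$ error term of hypothesis (1) with the correct uniformity in $z$ throughout the cone $|y| \leq Mx$. Once this is checked, an application of \cite[Theorem 1.2]{Bal} to both $H(1;q)$ and $H^c(a,b;q)$ yields the desired membership in the shifted Laguerre-Polya class for both exponential generating functions.
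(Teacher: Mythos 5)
Your approach is essentially the same as the paper's: the corollary is justified immediately before its statement by appealing to the extension of \cite{GORZ} given in \cite{Bal}, namely that any coefficient sequence satisfying hypotheses (1) and (2) of Proposition~\ref{WrightCircleMethod} has asymptotically hyperbolic Jensen polynomials and thus lies in the shifted Laguerre--Polya class; the paper then records the corollary without further proof, implicitly relying on the proofs of Theorems~\ref{Thm: main1} and~\ref{Thm: Main2}. Your proposal makes this verification explicit, correctly noting that the untwisted series inherits the hypotheses directly from the proof of Theorem~\ref{Thm: main1}, and that the twisted summands in \eqref{eqn: splitting} contribute only exponentially smaller terms on the major arc (absorbable into any polynomial error), while on the minor arc the bound passes to the multisection; the only place you are slightly imprecise is in citing \eqref{eqn: q-poch minor} directly for $F_1(\xi;q)^{-1}$ with $\xi\neq 1$ (one actually bounds $|F_1(\xi;q)^{-1}|\leq (|q|;|q|)_\infty^{-1}$ and then uses the exponential deficit from the dilogarithm computation), but this is precisely the level of detail the paper itself elides.
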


It is natural to search for the lower bound (which must exist) above which the functions actually lie in the ``usual'' Laguerre-Polya class, and so we pose this as a question to the interested reader.
\begin{question}
	Can one obtain the explicit lower bound $N$ such that for all $n > N$ the Jensen polynomial associated to $\overline{p}_{k,j}(n)$ or $\overline{p}^c_{k,j}(a,b;n)$ is hyperbolic?
\end{question}

Furthermore, it is clear that for a generating function $\sum_{n\geq 0} c(a,b;n) q^n$ (which may not be a modular form) whose first term in the multisection (similar to \eqref{eqn: multi}) dominates the asymptotic and is modular will lie in the shifted Laguerre-Polya class. It is perhaps possible that one may exploit this knowledge to obtain very strong estimates for the modular term (using e.g. the full Hardy-Ramanujan-Rademacher Circle Method) and aim to produce stronger estimates for the remaining (non-modular) terms. In the case of the functions studied here, this would rely on answering the following.
\begin{question}
	Can one obtain stronger bounds on the further terms in the multisection? In particular, can one obtain stronger bounds on products and quotients of $F_1(\zeta;q)$?
\end{question}

\subsection{Cyclotomic polynomials}

Wagner \cite{Wag} produced the crank functions $c$ central to this paper by using the fact that to be equidistributed (not just asymptotically) on an arithmetic progression modulo $\ell$, one must have divisibility by cyclotomic polynomials $\Phi_\ell$ as formal Laurent series. This observation has also been used by other authors in influential work on proving the unimodality of the rank function of Dyson (that is, a famous conjecture of Stanton) \cite{BMR}, as well as producing further infinite families of crank functions for ordinary partitions \cite{BGRT}, and in giving new proofs of Ramanujan-type congruences for other partition-theoretic objects \cite{FMR}.

That the functions $\overline{p}_{k,j}(n)$ satisfy many Ramanujan-type congruences was proved by Wagner using the theory of complex multiplication for modular forms, and as alluded to in the introduction he produced many examples of cranks which explain the congruences. It would be interesting to take such cranks and understand the combinatorial explanation more deeply. 
\begin{question}
	Can one prove the Ramanujan-type congruences satisfied by $\overline{p}_{k,j}(n)$ on the level of partitions? That is, give a precise combinatorial map which shows the congruences explicitly.
\end{question}

Given that the cyclotomic polynomials divide the generating function for the cranks as Laurent series, their quotient is also a Laurent series with integer coefficients. These coefficients would then appear to contain some arithmetic information which would extremely interesting to describe - in particular since we here have infinite many cranks explaining Ramanujan-type congruences. Following similar questions posed in \cite{FMR}, we ask the following, where $H^c(\zeta;q) \coloneqq \sum_{n \geq 0} H_n^c(\zeta) q^n$.
\begin{question}
	Fix a choice of crank $c$ which explains a given Ramanujan-type congruence on the progression $\ell n+b$. What is the combinatorial interpretation of the coefficients of the Laurent series $H_{\ell n+b}^c(\zeta)/\Phi_\ell(\zeta)$?
\end{question}

\begin{bibsection}
\begin{biblist}
	
				\bibitem{AGBAMS} G.E. Andrews and F.G. Garvan, \textit{Dyson's crank of a partition,} Bull. Amer. Math. Soc. (18) no.\ 2 (1988), 167-171.
				
				\bibitem{ASD} A. O. L. Atkin and P. Swinnerton-Dyer, \textit{Some properties of partitions}, Proc. London Math. Soc. (3) 4 (1954), 84--106.
				
				\bibitem{Bal} J. Bal, F. Haraldson, J. Males, I. Thompson, \textit{Jensen polynomials associated with Wright's circle method: Hyperbolicity and Turán inequalities}, preprint.

					\bib{BO}{article}{
					author = {{Bessenrodt}, C.},
					author={{Ono}, K.},
					TITLE = {Maximal multiplicative properties of partitions}
					JOURNAL = {Ann. Comb.},
					FJOURNAL = {Annals of Combinatorics},
					VOLUME = {20},
					YEAR = {2016},
					NUMBER = {1},
					PAGES = {59--64},
					ISSN = {0218-0006},
				}
				
	\bibitem{BCMO} K. Bringmann, W. Craig, J. Males, and K. Ono, \textit{Distributions on partitions arising from Hilbert schemes and hook lengths}, Forum Math. Sigma, 10, E49. 
	
		\bibitem{BD} K. Bringmann and J. Dousse, \textit{On Dyson's crank conjecture and the uniform asymptotic behavior of certain inverse theta functions}, Trans. Amer. Math. Soc. 368 (2016), no. 5, 3141--3155.
	
		\bibitem{BGRT} K. Bringmann, K. Gomez, L. Rolen, and Z. Tripp, \textit{Infinite families of crank functions, {S}tanton-type conjectures, and unimodality}, Res. Math. Sci., 9 (2022), no. 3, Paper No. 37.
		
	\bibitem{BMR} K. Bringmann, S.H.~Man, and L. Rolen, {\it Unimodality of Ranks and a Proof of Stanton's Conjecture}, preprint, \url{https://arxiv.org/abs/2209.12239}.
			
	\bibitem{Campbell} R. Campbell, \emph{Les int\'{e}grales eul\'{e}riennes et leurs applications. \'{E}tude
		approfondie de la fonction gamma}, Collection Universitaire de Math\'{e}matiques, XX, Dunod, Paris, 1966.
	
	\bibitem{CCM} G. Cesana, W. Craig, and J. Males, \textit{Asymptotic equidistribution for partition statistics and topological invariants}, preprint.
	
	\bibitem{CS} H. Cohen and F. Stromberg, \textit{Modular forms: a classical approach}, vol 179 of \textit{Graduate Studies in Mathematics}. American
	Mathematical Society, 2017.
	
	\bibitem{Chern} S. Chern, \textit{Nonmodular infinite products and a conjecture of Seo and Yee}, Adv. Math., 417 (2023), 108932.
	
	\bibitem{DeSalvo} S. DeSalvo and I. Pak, \textit{Log-concavity of the partition function}, Ramanujan J. {\bf 38} (2015), 1, 61--73.
	
			\bibitem{Dyson1944} F.~Dyson, \emph{Some guesses in the theory of partitions}, Eureka \textbf{8} (1944), 10--15.
			
	\bibitem{FMR} A. Folsom, J. Males, and L. Rolen, \textit{Equidistribution and partition polynomials}, Ramanujan J., to appear.
			
				\bibitem{GarvanTAMS} F. Garvan, \textit{New combinatorial interpretations of Ramanujan's partition congruences $\mod 5, 7,$ and $11$,} Trans. Amer. Math. Soc (305) no.\ 1 (1988), 47-77. 
			
		\bibitem{GORZ} M. Griffin, K. Ono, L. Rolen, and D. Zagier, \textit{Jensen polynomials for the {R}iemann zeta function and other
		sequences}, Proc. Natl. Acad. Sci. USA, {\bf 116} (2019), 23, 11103--11110.
	
	\bibitem{HardyRamanujan} G. Hardy and S. Ramanujan, \emph{Asymptotic formulae in combinatory analysis},
	Proc. London Math. Soc. Ser. 2 \textbf{17} (1918), 75--115.
	
	\bibitem{LZ} Z.-G. Liu and N. H. Zhou, \textit{Uniform asymptotic formulas for the 
	Fourier coefficients of the inverse of theta functions}, Ramanujan 
	J.  57 (2022), 1085--1123.
	
	\bibitem{NgoRhoades} H. Ngo and R. Rhoades. \textit{Integer Partitions, Probabilities and Quantum Modular Forms.}, Res. Math. Sci., \textbf{4}(2017). 

\bibitem{Wag} I. Wagner, \textit{Jacobi forms with {CM} and applications}, J. Number Theory, 246 (2023), 15--48.

\bibitem{Wagner} I. Wagner, \textit{On a new class of Laguerre-Pólya type functions with applications in number theory}, Pacific J. Math.320(2022), no.1, 177--192.

\bibitem{Zag} D. Zagier, \textit{The dilogarithm function}, Frontiers in number theory, physics, and geometry. II, Springer, Berlin (2007) 3--65
\end{biblist}
\end{bibsection}

\end{document}